\theoremstyle{plain} 
\newtheorem{thm}{Theorem}[section]
\newtheorem{cor}[thm]{Corollary}
\newtheorem{lem}[thm]{Lemma}
\newtheorem{prop}[thm]{Proposition}
\theoremstyle{definition}
\newtheorem{ex}[thm]{Example}
\theoremstyle{remark}
\newtheorem{rem}[thm]{Remark}
\def\fp{{\mathfrak p}}
\def\fm{{\mathfrak m}}
\numberwithin{equation}{section}
\def\NN{{\mathbb N}}
\def\ba{{\mathbf a}}
\def\Sym{\operatorname{Sym}}
\def\height{\operatorname{ht}}
\def\Spec{\operatorname{Spec}}
\def\rank{\operatorname{rank}}
\def\chara{\operatorname{char}}
\def\<{{\langle}}
\def\>{{\rangle}}
\def\ISp{I^{\rm Sp}}
\def\IVd{I^{\rm Vd}}
\begin{document}
\title{Vandermonde determinantal ideals}
\author{Junzo Watanabe}
\address{Department of Mathematics, Tokai University, Hiratsuka,, 259-1292 Japan}
\email{
watanabe.junzo@tokai-u.jp}
\author{Kohji Yanagawa}
\address{Department of Mathematics, Kansai University, Suita, Osaka 564-8680, Japan}
\email{yanagawa@kansai-u.ac.jp}
\thanks{The second authors is partially supported by JSPS Grant-in-Aid for Scientific Research (C) 16K05114.}
\maketitle

\begin{abstract}
We  show that the ideal generated by maximal minors (i.e., $k+1$-minors) of a  $(k+1) \times n$ Vandermonde matrix is radical and Cohen-Macaulay. Note that this ideal is generated by all Specht polynomials with  shape $(n-k,1, \ldots,1)$. 
\end{abstract}

\section{Introduction}
Let $n,k$ be integers with $n > k \ge 1$. Consider the polynomial ring $R=K[x_1, \ldots, x_n]$ over a field $K$, and the following non-square Vandermonde matrix
$$M_{n,k}:=\left( \begin{array}{llll}1&1&\cdots&1\\x_1&x_2&\cdots &x_n\\ x_1^2&x_2^2&\cdots &x_n^2\\\vdots&\vdots&\cdots &\vdots\\
x_1^k&x_2^k&\cdots &x_n^k\end{array}\right).$$
Let  $\IVd_{n,k}$ denote the ideal of $R$ generated by all maximal minors (i.e., $k+1$ minors) of $M_{n,k}$. 

The purpose of this paper is to prove the following.

\begin{thm}
$R/\IVd_{n,k}$ is a reduced Cohen-Macaulay ring with $\dim R/\IVd_{n,k}=k$ and  $\deg R/\IVd_{n,k}=S(n,k)$, where $S(n,k)$ stands for the Stirling number of the second kind. 
\end{thm}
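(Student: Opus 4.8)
The key structural observation is that the ideal $\IVd_{n,k}$ has a very concrete description once we expand the maximal minors. A $(k+1)$-minor of $M_{n,k}$ obtained by choosing columns $j_0 < j_1 < \cdots < j_k$ is the classical Vandermonde determinant $\prod_{0 \le s < t \le k}(x_{j_t} - x_{j_s})$. So $\IVd_{n,k}$ is generated by these products of differences over all $(k+1)$-subsets of $\{x_1,\ldots,x_n\}$. I would first prove the radical claim geometrically: the zero set $V(\IVd_{n,k})$ in $\mathbb{A}^n$ is exactly the locus of points $(a_1,\ldots,a_n)$ for which at least $n-k$ of the ``different value classes'' collapse, i.e. the set of points taking at most $k$ distinct values among the coordinates. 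This locus is a finite union of linear subspaces $L_\sigma$, one for each set partition $\sigma$ of $\{1,\ldots,n\}$ into at most $k$ blocks (on $L_\sigma$, coordinates in the same block are equal), and each $L_\sigma$ with exactly $k$ blocks has dimension $k$; those with fewer blocks are contained in the closure of larger strata. Counting the top-dimensional components gives exactly $S(n,k)$ components of dimension $k$, which already pins down $\dim R/\IVd_{n,k} = k$ and, once reducedness is known, $\deg R/\IVd_{n,k} = S(n,k)$ (each $L_\sigma$ is a linear space, hence of degree $1$).

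Next I would establish that $\IVd_{n,k}$ is the full (radical) ideal of this union of linear subspaces. One clean route: show by a Gröbner-basis or standard-monomial argument that the generators, which are products of differences, already cut out a reduced scheme. Concretely, I would try to identify a squarefree initial ideal: choosing the lex order with $x_1 > x_2 > \cdots > x_n$, the leading term of the minor on columns $j_0 < \cdots < j_k$ is (up to sign) a monomial $x_{j_0}^k x_{j_1}^{k-1}\cdots x_{j_{k-1}}$; establishing that these leading terms generate $\operatorname{in}(\IVd_{n,k})$ and that this monomial ideal is squarefree — or at least that $R/\operatorname{in}(\IVd_{n,k})$ is Cohen-Macaulay of the right dimension — would simultaneously give reducedness (a squarefree initial ideal forces the ideal to be radical) and Cohen-Macaulayness (which passes from $\operatorname{in}(I)$ to $I$). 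An alternative, perhaps more transparent, approach is to argue inductively on $n$: specializing $x_n$ to one of $x_1,\ldots,x_{n-1}$ relates $\IVd_{n,k}$ to $\IVd_{n-1,k}$ and $\IVd_{n-1,k-1}$, mirroring the recursion $S(n,k) = k\,S(n-1,k) + S(n-1,k-1)$, and one sets up a short exact sequence of the form $0 \to R/\IVd_{n-1,k}(-?) \to R/\IVd_{n,k} \to R/(\IVd_{n,k}, x_n - x_{n-1}) \to 0$ to feed an induction on both the Cohen-Macaulay property and the degree.

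For Cohen-Macaulayness directly, the cleanest tool is the Eagon--Northcott / Hochster--Eagon theory: the ideal of maximal minors of a $(k+1)\times n$ matrix of indeterminates is perfect of the expected grade $n-k$, and Cohen-Macaulayness is preserved under specialization provided the grade does not drop. So I would verify that $\operatorname{grade}(\IVd_{n,k}) = n-k = \operatorname{codim}$ (which follows from the dimension count above), and then invoke the principle that a grade-preserving specialization of a Cohen-Macaulay determinantal ideal remains Cohen-Macaulay. Combined with the radical claim, this yields that $R/\IVd_{n,k}$ is reduced Cohen-Macaulay of dimension $k$, and the degree is then computed as the number of top-dimensional linear components, namely $S(n,k)$.

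**The main obstacle.** The subtle point is \emph{reducedness}: it is easy to see $V(\IVd_{n,k})$ set-theoretically and to see that $\IVd_{n,k}$ is contained in the radical ideal of that union of linear spaces, but proving equality — that the products-of-differences generators already generate the full radical ideal, with no embedded or thickened structure — is where the real work lies. I expect the squarefree-initial-ideal computation (identifying $\operatorname{in}_{\mathrm{lex}}(\IVd_{n,k})$ and recognizing it as the Stanley--Reisner ideal of a shellable or Cohen--Macaulay complex) to be the technical heart, and I would spend most of the effort there; everything else (dimension, degree, Cohen-Macaulayness) follows fairly formally once a squarefree initial ideal with the right Hilbert function is in hand.
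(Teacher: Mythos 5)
Your dimension count, the identification of $V(\IVd_{n,k})$ with the union of the $S(n,k)$ coordinate-collapsing linear subspaces, and the Cohen--Macaulayness argument via maximal minors of expected codimension $n-k$ all match the paper's proof. The gap is exactly where you predicted it would be: reducedness. Your primary route cannot work as stated. The lex leading term of the minor on columns $j_0<\cdots<j_k$ is indeed $x_{j_0}^k x_{j_1}^{k-1}\cdots x_{j_{k-1}}$, which is \emph{not} squarefree for $k\ge 2$; so if these terms generate $\operatorname{in}(\IVd_{n,k})$, that initial ideal is certainly not a radical monomial ideal, and the two halves of your claim are incompatible. Worse, no term order can produce a squarefree initial ideal here: a radical monomial ideal containing $x_{j_0}^k\cdots x_{j_{k-1}}$ would have to contain the degree-$k$ squarefree monomial $x_{j_0}\cdots x_{j_{k-1}}$, whereas $\IVd_{n,k}$ --- and hence $\operatorname{in}(\IVd_{n,k})$, which has the same Hilbert function --- begins in degree $\binom{k+1}{2}>k$. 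Your alternative inductive route via a short exact sequence is only sketched and never explains how reducedness would actually be extracted from it.

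The paper closes this gap by a mechanism your outline circles around but never states: it computes $\deg R/\IVd_{n,k}$ \emph{without assuming reducedness}, showing that the multiplicity satisfies the Stirling recursion $\deg R/\IVd_{n,k}=\deg R'/\IVd_{n-1,k-1}+k\,\deg R'/\IVd_{n-1,k}$ (via a lemma of Mir\'o-Roig from Gorenstein liaison theory applied to the reduced $k\times(n-1)$ matrix $N_{n,k}$ and its submatrices; an earlier version did this by summing Betti numbers of the Eagon--Northcott resolution). Since this degree equals $S(n,k)=\deg R/\sqrt{\IVd_{n,k}}$ from the geometric count, and a Cohen--Macaulay ring is unmixed so any nonreduced structure would strictly increase the degree over that of the reduction, reducedness follows. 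Note that your stated logic runs in the wrong direction --- you deduce the degree from reducedness, whereas the paper deduces reducedness from an independent degree computation. Your plan is salvageable, but you must replace the Gr\"obner step with an honest computation of the multiplicity of the a priori nonreduced scheme and then invoke unmixedness.
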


The present paper can be seen as the  precursor of our ongoing project \cite{WY} on {\it Specht ideals}. 
For a partition $\lambda$ of $n$,  we can consider the ideal 
$$\ISp_\lambda= (\Delta_T \mid \text{$T$ is a Young tableau of shape $\lambda$}) $$ 
of $R$, where $\Delta_T \in R$ denotes the {\it Specht polynomial} corresponding to $T$ (see \cite{LefPro}). 
Then we have $\IVd_{n,k} =\ISp_{(n-k,1,\ldots, 1)}$. 
Note that the $K$-vector subspace $\<\,  \Delta_T \mid \text{$T$ is a Young tableau of shape $\lambda$} \, \>$ of $R$ is the {\it Specht module} associated with $\lambda$ as an $S_n$-module. 
The Specht modules are often constructed in different manner (e.g., using Young tabloids), and play crucial role in the representation theory of symmetric groups (see, for example \cite{Sa}).   
General Specht ideals are much more delicate than the Vandermont case $\IVd_{n,k}$. 
For example,  $R/\ISp_\lambda$ is not even pure dimensional. for many $\lambda$, and 
the Cohen-Macaulayness of $R/\ISp_\lambda$ may depend on $\chara(K)$ for some fixed  $\lambda$.   
In \cite{WY}, we will use the representation theory of symmetric groups. 

It is noteworthy that Fr\"{o}berg and Shapiro  \cite{FS} also studied  some variants of $R/\IVd_\lambda$ 
in a different context. 

\section{Results and proofs}
Extending the base field, we may assume that $K$ is algebraically closed. Theoretically, this assumption is not necessary in the following argument, but it makes the expositions more readable.

For an ideal $I \subset R$, set $V(I):=\{ \fp \mid \fp \in \Spec R,  \fp \supset I\}$ as usual. 
For $\ba=(a_1, \ldots, a_n) \in K^n$, let $\fm_\ba$ denote the maximal ideal 
$(x_1 -a_1, x_2 - a_2, \ldots, x_n - a_n)$ of $R$.  
By abuse of notation, we just write $\ba \in V(I)$ to mean $\fm_\ba \in V(I)$. 
Clearly, $\ba \in V(I)$ if and only if $f(\ba)=0$ for all $f \in I$.

\begin{prop}
We have $\dim R/\IVd_{n,k}=k$ and
$$\deg \left(R/\sqrt{\IVd_{n,k}} \right)=S(n,k),$$ 
where $S(n,k)$ stands for the Stirling number of the second kind, that is, 
the number of ways to partition the set $\{1,2, \ldots, n\}$  into $k$ non-empty subsets. 
\end{prop}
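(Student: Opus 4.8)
The plan is to identify the variety $V(\IVd_{n,k})$ set-theoretically, compute its dimension and the number and dimension of its irreducible components, and then deduce the degree of $R/\sqrt{\IVd_{n,k}}$. The key observation is the classical rank interpretation of maximal minors: for $\ba=(a_1,\dots,a_n)\in K^n$, the point $\ba$ lies in $V(\IVd_{n,k})$ if and only if every $(k+1)\times(k+1)$ minor of $M_{n,k}$ vanishes at $\ba$, i.e.\ if and only if the specialized matrix $M_{n,k}(\ba)$ has rank $\le k$. Since the columns of $M_{n,k}(\ba)$ are the Vandermonde-type vectors $(1,a_j,a_j^2,\dots,a_j^k)^{\mathsf t}$, and any $k+1$ of these are linearly independent precisely when the corresponding $a_j$ are pairwise distinct, the rank of $M_{n,k}(\ba)$ equals $\min(k+1,\#\{a_1,\dots,a_n\})$. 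Hence $\ba\in V(\IVd_{n,k})$ if and only if the coordinates $a_1,\dots,a_n$ take at most $k$ distinct values.

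First I would make this description precise and break $V(\IVd_{n,k})$ into pieces indexed by set partitions: for each partition $\pi$ of $\{1,\dots,n\}$ into exactly $k$ nonempty blocks $B_1,\dots,B_k$, let $L_\pi\subset\Spec R$ be the linear subspace defined by the equations $x_i=x_j$ whenever $i,j$ lie in the same block; this is a $k$-dimensional linear subspace, hence irreducible of degree $1$. From the rank computation, $V(\IVd_{n,k})=\bigcup_\pi L_\pi$, the union over all such $\pi$; there are $S(n,k)$ of them. Next I would check that each $L_\pi$ is actually an irreducible component, i.e.\ that no $L_\pi$ is contained in the union of the others — this is immediate since a partition into exactly $k$ blocks gives a subspace not contained in any $L_{\pi'}$ for $\pi'\neq\pi$ with $k$ blocks (a generic point of $L_\pi$ has exactly $k$ distinct coordinate values distributed according to $\pi$), and the subspaces coming from partitions into fewer than $k$ blocks are lower-dimensional and already covered. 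This yields $\dim R/\IVd_{n,k}=\dim R/\sqrt{\IVd_{n,k}}=k$, and shows $\sqrt{\IVd_{n,k}}=\bigcap_\pi P_\pi$ where $P_\pi$ is the (prime) ideal of $L_\pi$.

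Finally, since $R/\sqrt{\IVd_{n,k}}$ has pure dimension $k$ with exactly $S(n,k)$ minimal primes, each of degree $1$ (the $L_\pi$ being linear subspaces), the degree is additive over the top-dimensional components: $\deg(R/\sqrt{\IVd_{n,k}})=\sum_\pi\deg(R/P_\pi)=S(n,k)\cdot 1=S(n,k)$. To run this additivity cleanly I would invoke the associativity formula for multiplicity (or simply that for a reduced equidimensional scheme the degree is the sum of the degrees of its irreducible components), noting that distinct $L_\pi$ meet in dimension $<k$ so there is no overcounting.

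The main obstacle I expect is the rank-of-specialization step: one must argue carefully that the vanishing of \emph{all} $(k+1)$-minors at $\ba$ is equivalent to $\#\{a_1,\dots,a_n\}\le k$, in both directions, and in particular that if the $a_j$ realize $k+1$ or more distinct values then some maximal minor — namely a Vandermonde determinant on $k+1$ distinct values — is nonzero. The rest is bookkeeping with set partitions and a standard fact about degrees of reduced equidimensional schemes.
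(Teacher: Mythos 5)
Your proposal is correct and follows essentially the same route as the paper: identify $V(\IVd_{n,k})$ via the rank condition as the locus where the coordinates take at most $k$ distinct values, decompose it as the union of the $S(n,k)$ linear subspaces indexed by partitions of $[n]$ into $k$ blocks, and sum the degrees of these degree-one, $k$-dimensional components. The only difference is that you spell out the verification that each $L_\pi$ is a genuine irreducible component and flag the Vandermonde rank computation explicitly, both of which the paper leaves implicit.
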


\begin{proof}
For  $\ba = (a_1, \ldots, a_n) \in K^n$, $\ba \in V(\IVd_{n,k})$ if and only if 
$$\rank (M_{n,k}(\ba)) \le k,$$ 
where  $M_{n,k}(\ba)$ is the matrix given by putting $x_i=a_i$ for each $i$ in $M_{n,k}$. 
The latter condition is equivalent to that $\# \{a_1, \ldots, a_n \} \le k$. 
This is also equivalent to that there is a partition 
$\Pi =\{F_1, \ldots, F_k \}$ of  the set $[n] := \{1,2, \ldots, n\}$ such that $a_i=a_j$ for all $i,j  \in F_l$ ($l=1,2, \ldots, k$). 
For the above partition $\Pi$, let $P_\Pi$ denote the prime ideal 
$$(x_i-x_j \mid \text{$i,j \in F_l$ for $l=1,\ldots, k$})$$
of $R$. Since
$$\sqrt{\IVd_{n,k}}=\bigcap_{\substack{\text{$\Pi$: partition of $[n]$} \\ \# \Pi =k}} P_\Pi,$$
$\dim R/P_\Pi =k$ for all $\Pi$, and $\deg R/P_\Pi =1$, we are done. 
\end{proof}

Applying elementary column operations to $M_{n,k}$, we get the following matrix  
$$M'_{n,k}:=\left( \begin{array}{ccccc}1&0&0&\cdots&0\\x_1&x_2-x_1&x_3-x_1&\cdots &x_n-x_1\\ x_1^2&x_2^2-x_1^2&x_3^2-x_1^2&\cdots &x_n^2-x_1^2\\\vdots&\vdots&\vdots &\cdots &\vdots\\
x_1^k&x_2^k-x_1^k&x_3^k-x_1^k&\cdots &x_n^k-x_1^k\end{array}\right).$$ 
Consider its  $k \times (n-1)$ submatrix
 $$N_{n,k}:=\left( \begin{array}{cccc}x_2-x_1&x_3-x_1&\cdots &x_n-x_1\\ x_2^2-x_1^2&x_3^2-x_1^2&\cdots &x_n^2-x_1^2\\\vdots&\vdots &\cdots &\vdots\\
x_2^k-x_1^k&x_3^k-x_1^k&\cdots &x_n^k-x_1^k\end{array}\right).$$
Clearly, $\IVd_{n,k}$ is generated by all maximal minors (i.e., $k$-minors) of $N_{n,k}$. 

\begin{thm}\label{Vandermonde main}
$R/\IVd_{n,k}$  is Cohen-Macaulay. Moreover, its minimal graded free resolution is given by the Eagon-Northcott complex (see, for example \cite{M08}) associated to the matrix  $N_{n,k}$.
\end{thm}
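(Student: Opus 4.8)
The plan is to invoke the Eagon--Northcott criterion for maximal minors. Recall that if $\varphi \colon R^{p} \to R^{q}$ is a map of free modules with $p \ge q$, and $I_q(\varphi)$ denotes the ideal of maximal ($q\times q$) minors, then the Eagon--Northcott complex associated to $\varphi$ is a free resolution of $R/I_q(\varphi)$ precisely when $\height I_q(\varphi)$ attains its maximal possible value $p - q + 1$; in that case $R/I_q(\varphi)$ is Cohen--Macaulay (this is the standard theorem on ideals of maximal minors, see \cite{M08}). Here we take $\varphi$ to be the map given by the matrix $N_{n,k}$, so $q = k$ and $p = n-1$, and the generic height bound is $p - q + 1 = n - k$. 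Since $\dim R = n$ and $\dim R/\IVd_{n,k} = k$ by the Proposition, we have $\height \IVd_{n,k} = n - k$, which is exactly the value required. Hence the Eagon--Northcott complex of $N_{n,k}$ resolves $R/\IVd_{n,k}$, and $R/\IVd_{n,k}$ is Cohen--Macaulay.

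The one point needing care is that the Eagon--Northcott complex as just described resolves $R/I_k(N_{n,k})$, and we must confirm $I_k(N_{n,k}) = \IVd_{n,k}$ rather than merely having the same radical. This is already recorded in the excerpt: the column operations transforming $M_{n,k}$ into $M'_{n,k}$ are invertible over $R$ (they only subtract the first column from the others), so the ideal of maximal minors is unchanged, and expanding a maximal minor of $M'_{n,k}$ along its first column (which is $(1,x_1,\dots,x_1^k)^{T}$ with a single nonzero entry $1$ in the top row after the column reduction, for any minor using the first column) shows that the $(k{+}1)$-minors of $M'_{n,k}$ are, up to sign, exactly the $k$-minors of $N_{n,k}$; minors of $M'_{n,k}$ not involving the first column vanish because that would require a $(k{+}1)$-minor of a $k$-rowed matrix. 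Thus $\IVd_{n,k} = I_k(N_{n,k})$ as ideals, on the nose.

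The remaining step is to check that the Eagon--Northcott complex is \emph{minimal}, i.e.\ that its differentials have entries in $\fm = (x_1,\dots,x_n)$. This follows because every entry of $N_{n,k}$ lies in $\fm$ (each is of the form $x_j^i - x_1^i$, which vanishes at the origin), and the differentials of the Eagon--Northcott complex are built, degree by degree, either from a Koszul-type differential with entries among the $x_j^i - x_1^i$ or from the $k\times k$ minors of $N_{n,k}$, all of which are homogeneous of positive degree; hence no unit entries appear. Therefore the complex is the minimal graded free resolution of $R/\IVd_{n,k}$, as claimed.

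I do not anticipate a genuine obstacle: the content is entirely in the height computation, which has already been done in the Proposition, together with the bookkeeping identification $\IVd_{n,k} = I_k(N_{n,k})$. One should simply be careful to cite the correct form of the Eagon--Northcott acyclicity theorem (the precise hypothesis is on the grade/height of the maximal-minor ideal, which here equals the codimension because $R$ is Cohen--Macaulay).
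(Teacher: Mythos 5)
Your proof is correct and follows essentially the same route as the paper: the paper likewise observes that $\height(\IVd_{n,k}) = n-k = (n-1)-k+1$ is the maximal possible height for the ideal of maximal minors of the $k \times (n-1)$ matrix $N_{n,k}$, so that $\IVd_{n,k}$ is a standard determinantal ideal and the Eagon--Northcott complex is its (minimal graded) free resolution. (One cosmetic slip: in your cofactor expansion it is the first \emph{row} of $M'_{n,k}$, namely $(1,0,\dots,0)$, that has a single nonzero entry, not the first column.)
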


\begin{proof}
Since $\height(\IVd_{n,k}) = \dim R -\dim R/\IVd_{n,k} = n-k = (n-1)-k+1$, $\IVd_{n,k}$ is a {\it standard determinantal ideal} in the sense of \cite{M08}. Hence the assertion is immediate from well-known properties of this notion (c.f.  \S1.2 of \cite{M08}). 
\end{proof}

When we construct the Eagon-Northcott resolution of $\IVd_{n,k}$, we use  a symmetric power $\Sym_i V$ 
of a $k$-dimensional vector space $V$with a basis $e_1, \ldots, e_k$ such that $\deg e_i =i$ for each $i$. 
Set
$$p^{m}_{i,j} :=\# \{ (a_1, \ldots, a_m) \in \NN^m \mid a_1 + a_2 +\cdots + a_m =i, \, a_1 + 2 a_2 +\cdots + m a_m =j \}$$
For simplicity, set $p^m_{0,j} := \delta_{0,j}$. 
The following facts are easy to see.  
\begin{itemize}
\item $p^{m}_{i,j} \ne 0$ if and only if $i \le j \le im$,
\item $\sum_j p^{m}_{i,j} = \binom{m+i-1}{i}$. 
\end{itemize}
For  the vector space $V$ discussed above,  the dimension of the degree $j$ part of $\Sym_i V$ is $p^k_{i,j}$.

\begin{cor}\label{betti numbers}
For $i \ge 1$, we have 
$$\beta_{i,j}(R/\IVd_{n,k}) = p^k_{i-1, j-\frac{k(k+1)}2 } \times \binom{n-1}{k+i-1}.$$ 
\end{cor}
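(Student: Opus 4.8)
The plan is to read off $\beta_{i,j}$ directly from the Eagon--Northcott complex $\mathrm{EN}$ of $N_{n,k}$, which by Theorem~\ref{Vandermonde main} is already the minimal graded free resolution of $R/\IVd_{n,k}$; hence no cancellation occurs and each Betti number is literally the number of copies of the corresponding shift of $R$ in a graded piece of $\mathrm{EN}$. The first step is to fix the graded structure of $N_{n,k}$. I would regard $N_{n,k}$ as the matrix of a homogeneous map $\varphi\colon F\to G$ of graded free $R$-modules, with $F$ having basis $f_1,\dots,f_{n-1}$ (the columns) and $G$ having basis $g_1,\dots,g_k$ (the rows). Since the $(s,t)$ entry $x_{t+1}^s-x_1^s$ is homogeneous of degree $s$, the map $\varphi$ preserves degrees exactly when $\deg g_s=\deg f_t-s$ for all $t$; so I take $F=R^{n-1}$ and $G=\bigoplus_{s=1}^k R(s)$, whence $G^*=\bigoplus_{s=1}^k R(-s)$ and $\bigwedge^k G^*=R(-\tfrac{k(k+1)}2)$. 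In particular every maximal minor of $N_{n,k}$ is homogeneous of degree $\tfrac{k(k+1)}2=1+2+\cdots+k$, which is the source of the shift in the statement.

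Next I would recall (see \S1.2 of \cite{M08}) that for $i\ge1$ the $i$-th term of the Eagon--Northcott complex of $\varphi$ is $\mathrm{EN}_i=\bigwedge^{k+i-1}F\otimes_R D_{i-1}(G^*)\otimes_R\bigwedge^k G^*$, where $D_{i-1}$ denotes the $(i-1)$-st divided power; the complex has length $(n-1)-k+1=n-k=\height(\IVd_{n,k})$, as it must. Then I compute the graded twists of the three tensor factors: $\bigwedge^{k+i-1}F$ is free of rank $\binom{n-1}{k+i-1}$ with all generators in degree $0$; $\bigwedge^k G^*=R(-\tfrac{k(k+1)}2)$ as noted; and, since the generators $g_1^*,\dots,g_k^*$ of $G^*$ sit in degrees $1,\dots,k$, the module $D_{i-1}(G^*)$ has the same graded Hilbert function as $\Sym_{i-1}V$ for the space $V=\bigoplus_{s=1}^k Ke_s$ with $\deg e_s=s$ considered before the statement, so its degree-$\ell$ part has dimension $p^k_{i-1,\ell}$. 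Combining these, $\mathrm{EN}_i=\bigoplus_{\ell} R(-\ell-\tfrac{k(k+1)}2)^{\binom{n-1}{k+i-1}\,p^k_{i-1,\ell}}$, and since $\mathrm{EN}$ is the minimal free resolution, reading off the number of copies of $R(-j)$ (with $\ell=j-\tfrac{k(k+1)}2$) gives exactly the asserted $\beta_{i,j}(R/\IVd_{n,k})$.

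The main obstacle lies entirely in the first step: because $N_{n,k}$ is not a matrix of linear forms --- its $s$-th row is homogeneous of degree $s$ --- one must be careful to put the twist $R(s)$ on the $s$-th basis vector of $G$ before $\mathrm{EN}$ carries the correct $\ZZ$-grading, and it is precisely this bookkeeping that propagates through $\bigwedge^k G^*$ and $D_{i-1}(G^*)$ to produce both the overall shift $\tfrac{k(k+1)}2$ and the finer weights counted by $p^k_{i-1,\ell}$. After that the argument is a routine rank-and-twist count; in particular nothing depends on $\chara K$, since only the graded dimensions of $D_{i-1}(G^*)$ enter and these agree with those of $\Sym_{i-1}V$, so the distinction between divided and symmetric powers is immaterial. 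The restriction $i\ge1$ is harmless, as $\mathrm{EN}_0=R$ contributes only $\beta_{0,0}=1$.
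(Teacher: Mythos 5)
Your proof is correct and takes essentially the same route as the paper: both read the graded Betti numbers directly off the Eagon--Northcott complex of $N_{n,k}$, which Theorem~\ref{Vandermonde main} identifies as the minimal free resolution, with the overall shift $\tfrac{k(k+1)}{2}$ coming from $\bigwedge^k G^*$ and the finer weights $p^k_{i-1,\ell}$ from the symmetric (equivalently, divided) power of the graded space with generators in degrees $1,\dots,k$. Your bookkeeping of the twists on $G$ is a bit more explicit than the paper's one-line computation, but the content is identical.
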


\begin{proof}
Since the minimal free resolution of $R/\IVd_{n,k}$ is given by  the Eagon-Northcott complex, we have  
\begin{eqnarray*}
\beta_{i,j}(R/\IVd_{n,k})  &= &
\left(\dim_K [ \, (\Sym_{i-1} V) \otimes_K \bigwedge^k V \, ]_{j} \right) \times (\dim_K \bigwedge^{k+i-1} W)\\
&= &(\dim_K [ \, \Sym_{i-1} V \, ]_{j-\frac{k(k+1)}2} ) \times (\dim_K \bigwedge^{k+i-1} W)\\
&=& p^k_{i-1, j-\frac{k(k+1)}2 } \times \binom{n-1}{k+i-1},
\end{eqnarray*}
where $V$ is the $K$-vector space considered above, and 
$W$ is a $K$-vector space of dimension $n-1$. 
\end{proof}

\begin{ex}\label{3 tables}
Since $p^2_{i,j} =0$ or $1$ for all $i,j$, we have $\beta_{i,j}(R/\IVd_{n,2}) = 0$ or $\binom{n-1}{i+1}$ for all $i \ge 1$.  For example, the  Betti table of $R/\IVd_{6,2}$ is the following. 
\begin{verbatim}
     total: 1 10 20 15 4
         0: 1  .  .  . .
         1: .  .  .  . .
         2: . 10 10  5 1
         3: .  . 10  5 1
         4: .  .  .  5 1
         5: .  .  .  . 1
\end{verbatim}

The following are the Betti tables of $R/\IVd_{6,3}$ and $R/\IVd_{7,3}$, respectively. 
\begin{verbatim}
      total: 1 10 15 6
          0: 1  .  . .
          1: .  .  . .
          2: .  .  . .
          3: .  .  . .
          4: .  .  . .
          5: . 10  5 1
          6: .  .  5 1
          7: .  .  5 2
          8: .  .  . 1
          9: .  .  . 1
\end{verbatim}

\medskip 

\begin{verbatim}
     total: 1 20 45 36 10
         0: 1  .  .  .  .
         1: .  .  .  .  .
         2: .  .  .  .  .
         3: .  .  .  .  .
         4: .  .  .  .  .
         5: . 20 15  6  1
         6: .  . 15  6  1
         7: .  . 15 12  2
         8: .  .  .  6  2
         9: .  .  .  6  2
        10: .  .  .  .  1
        11: .  .  .  .  1
\end{verbatim}
\end{ex}


\begin{thm}\label{degree}
We have $$\deg R/\IVd_{n,k} = S(n,k).$$
\end{thm}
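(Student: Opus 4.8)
The plan is to combine the description of $\sqrt{\IVd_{n,k}}$ obtained in the Proposition above with the associativity formula for the degree. By that Proposition, $\sqrt{\IVd_{n,k}} = \bigcap_{\#\Pi = k} P_\Pi$ is an intersection of $S(n,k)$ pairwise distinct (indeed pairwise incomparable) primes, each of dimension $k = \dim R/\IVd_{n,k}$ and of degree $1$ since $P_\Pi$ is generated by linear forms; these are exactly the minimal primes of $\IVd_{n,k}$ of maximal dimension, and by Theorem~\ref{Vandermonde main} they are in fact all of its minimal primes. The associativity formula for multiplicity then gives
$$\deg R/\IVd_{n,k} = \sum_{\#\Pi = k} \operatorname{length}_{R_{P_\Pi}}\!\big((R/\IVd_{n,k})_{P_\Pi}\big) \cdot \deg R/P_\Pi = \sum_{\#\Pi = k} \operatorname{length}_{R_{P_\Pi}}\!\big((R/\IVd_{n,k})_{P_\Pi}\big),$$
so the theorem reduces to the claim that each local multiplicity equals $1$, i.e. that $(\IVd_{n,k})_{P_\Pi} = (P_\Pi)_{P_\Pi}$ for every partition $\Pi$ of $[n]$ into $k$ nonempty blocks.

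To prove this, fix $\Pi = \{F_1, \ldots, F_k\}$ and choose a representative $i_l \in F_l$ for each $l$. Then $P_\Pi$ is generated by the $n-k$ linear forms $x_a - x_{i_l}$ with $l \in \{1, \ldots, k\}$ and $a \in F_l \setminus \{i_l\}$, so it suffices to show that each such $x_a - x_{i_l}$ belongs to $(\IVd_{n,k})_{P_\Pi}$. The maximal minor of $M_{n,k}$ formed from the columns indexed by $i_1, \ldots, i_k, a$ is the Vandermonde determinant in the corresponding $k+1$ variables, that is, up to sign the product of all differences $x_{i_s} - x_{i_t}$ $(1 \le s < t \le k)$ and $x_a - x_{i_s}$ $(1 \le s \le k)$. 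Every one of these factors except $x_a - x_{i_l}$ is a difference of two variables whose indices lie in distinct blocks of $\Pi$, hence lies outside $P_\Pi$ and becomes a unit in $R_{P_\Pi}$. Therefore this minor equals a unit times $x_a - x_{i_l}$ in $R_{P_\Pi}$, which shows $x_a - x_{i_l} \in (\IVd_{n,k})_{P_\Pi}$. Together with the trivial inclusion $(\IVd_{n,k})_{P_\Pi} \subseteq (\sqrt{\IVd_{n,k}})_{P_\Pi} \subseteq (P_\Pi)_{P_\Pi}$, this yields $(\IVd_{n,k})_{P_\Pi} = (P_\Pi)_{P_\Pi}$, so $(R/\IVd_{n,k})_{P_\Pi}$ is a field and its length is $1$. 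Feeding this back into the displayed formula, each of the $S(n,k)$ summands equals $1$, and the conclusion $\deg R/\IVd_{n,k} = S(n,k)$ follows.

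I expect the Vandermonde-minor localization to be the only substantive step; the rest is standard intersection-theoretic bookkeeping. One could instead try to extract the degree from the Hilbert series determined by the Eagon-Northcott resolution (Corollary~\ref{betti numbers}), but that would force one to recognize $S(n,k)$ inside an alternating sum of products of the combinatorial quantities $p^k_{i,j}$ with binomial coefficients, which looks considerably more painful; I would fall back on it only if the localization argument failed. As a byproduct, note that the same localization argument shows $\IVd_{n,k}$ is generically reduced, which combined with the Cohen-Macaulayness of Theorem~\ref{Vandermonde main} already reproves that $R/\IVd_{n,k}$ is reduced.
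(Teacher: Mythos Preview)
Your proof is correct but takes a genuinely different route from the paper. The paper proceeds by induction on $n$: after checking the boundary cases $k=1$ and $k=n-1$, it invokes \cite[Lemma~2.3(2)]{M06} from Gorenstein liaison theory to show that $\deg R/\IVd_{n,k}$ satisfies the same recurrence $S(n,k)=S(n-1,k-1)+kS(n-1,k)$ as the Stirling numbers. Your argument instead works directly: you apply the associativity formula and then exploit the explicit factorization of a Vandermonde minor to show that $\IVd_{n,k}$ localizes to $P_\Pi$ at each minimal prime $P_\Pi$. Your approach is more elementary---no external liaison result is needed---and, as you note, yields generic reducedness (hence, with Cohen--Macaulayness, full reducedness) for free, whereas the paper deduces reducedness afterward by comparing $\deg R/\IVd_{n,k}$ with $\deg R/\sqrt{\IVd_{n,k}}$. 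The paper's recurrence argument, on the other hand, exhibits a structural relation among the $\IVd_{n,k}$ for varying $(n,k)$ that is invisible in your localization computation. One small remark: your appeal to Theorem~\ref{Vandermonde main} to identify the minimal primes is not actually needed, since the minimal primes of an ideal coincide with those of its radical, and the Proposition already lists the latter as the $P_\Pi$ with $\#\Pi=k$.
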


\begin{proof}
Since $\IVd_{n,1}$ is an ideal generated by linear forms, we have 
$\deg R/\IVd_{n,1} = 1 = S(n,1)$ for all $n \ge 2$. Since $\IVd_{n,n-1}$ is a principal ideal generated by a polynomial of degree $\binom{n}{2}$, we have  $\deg R/\IVd_{n,n-1} = \binom{n}{2} = S(n,n-1)$. 
It is well-known that the Stirling numbers of the second kind satisfy the recurrence relation
$$S(n, k) = S(n-1,k-1)+kS(n-1,k).$$
So it suffices to show that $\deg R/\IVd_{n,k}$ also satisfies the corresponding relation 
\begin{equation}\label{recursion}
\deg R/\IVd_{n,k}  = \deg  R'/\IVd_{n-1,k-1}  + k(\deg  R'/\IVd_{n-1,k} )
\end{equation}
for $n-1 > k$, where $R'$ is the polynomial ring $K[x_1, \ldots, x_{n-1}]$. 

From now on, we assume that $n-1 > k$. 
Note that the matrices 
$$N_{n-1,k-1}:=\left( \begin{array}{cccc}x_2-x_1&x_3-x_1&\cdots &x_{n-1}-x_1\\ x_2^2-x_1^2&x_3^2-x_1^2&\cdots &x_{n-1}^2-x_1^2\\\vdots&\vdots &\cdots &\vdots\\
x_2^{k-1}-x_1^{k-1}&x_3^{k-1}-x_1^{k-1}&\cdots &x_{n-1}^{k-1}-x_1^{k-1}\end{array}\right)$$
and 
$$N_{n-1,k}:=\left( \begin{array}{cccc}x_2-x_1&x_3-x_1&\cdots &x_{n-1}-x_1\\ x_2^2-x_1^2&x_3^2-x_1^2&\cdots &x_{n-1}^2-x_1^2\\\vdots&\vdots &\cdots &\vdots\\
x_2^k-x_1^k&x_3^k-x_1^k&\cdots &x_{n-1}^k-x_1^k\end{array}\right)$$
can be regarded as submatices of $N_{n.k}$. Let $J_1$ and $J_2$ be the ideals (of $R$) generated by all maximal minors of $N_{n-1,k-1}$ and $N_{n-1,k}$, respectively. By \cite[Lemma~2.3 (2)]{M06}, we have 
$$\deg R/\IVd_{n,k} = \deg R/J_1 + k(\deg R/J_2).$$
On the other hand, we have $R/J_1 \cong ( R'/\IVd_{n-1,k-1})[x_n]$, and hence $\deg R/J_1 = \deg  R'/\IVd_{n-1,k-1}$. Similarly,  $\deg R/J_2 = \deg  R'/\IVd_{n-1,k}$. 
Now \eqref{recursion} is clear. 
\end{proof}

\begin{rem}
In the first version of this paper, the key formula \eqref{recursion} was shown by a direct computation from  Corollary~\ref{betti numbers}. More precisely, the equations
 $$\beta_{1,j}(R/\IVd_{n,k})=\beta_{1, j-k}(R/\IVd_{n-1,k-1})+\beta_{1,j}(R/\IVd_{n-1,k})$$
and 
$$\beta_{i,j}(R/\IVd_{n,k})=\beta_{i, j-k}(R/\IVd_{n-1,k-1})+\beta_{i,j}(R/\IVd_{n-1,k})+\beta_{i-1,j-k}(R/\IVd_{n-1,k})$$
hold for $i \ge 2$. One can check this in Example~\ref{3 tables}.  Anyway, we see that these equations imply 
\eqref{recursion}. 

Now we know that \eqref{recursion} is a direct consequence of \cite[Lemma~2.3 (2)]{M06}. It is noteworthy that this lemma is a result of Gorenstein liaison theory. 
\end{rem}

\begin{cor}
$R/\IVd_{n,k}$ is reduced. 
\end{cor}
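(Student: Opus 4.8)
The plan is to combine the two main results already established: that $R/\IVd_{n,k}$ is Cohen--Macaulay (Theorem~\ref{Vandermonde main}) and that $\deg R/\IVd_{n,k} = S(n,k)$ (Theorem~\ref{degree}), together with the computation of $\deg(R/\sqrt{\IVd_{n,k}}) = S(n,k)$ from the Proposition. The key observation is that $\IVd_{n,k}$ and its radical define the same dimension $k$, so it makes sense to compare their degrees directly.

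First I would note that $\IVd_{n,k} \subseteq \sqrt{\IVd_{n,k}}$, and that $\dim R/\IVd_{n,k} = \dim R/\sqrt{\IVd_{n,k}} = k$ (the first equality is in the Proposition, the second is immediate since passing to the radical does not change dimension). Next I would invoke the standard fact that a surjection $R/\IVd_{n,k} \twoheadrightarrow R/\sqrt{\IVd_{n,k}}$ of graded rings of the same Krull dimension forces $\deg R/\IVd_{n,k} \ge \deg R/\sqrt{\IVd_{n,k}}$, with equality if and only if the kernel $\sqrt{\IVd_{n,k}}/\IVd_{n,k}$ has dimension strictly less than $k$ — more precisely, equality of degrees holds exactly when the nilradical is supported in dimension $< k$. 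Since Theorem~\ref{degree} gives $\deg R/\IVd_{n,k} = S(n,k)$ and the Proposition gives $\deg R/\sqrt{\IVd_{n,k}} = S(n,k)$, the two degrees coincide, so $\dim\bigl(\sqrt{\IVd_{n,k}}/\IVd_{n,k}\bigr) < k$.

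Then I would use Cohen--Macaulayness to upgrade "the nilradical has small support" to "the nilradical vanishes." Since $R/\IVd_{n,k}$ is Cohen--Macaulay of dimension $k$, it is in particular unmixed: every associated prime has dimension exactly $k$ (equivalently, $\IVd_{n,k}$ has no embedded primes and all minimal primes have the same dimension). But a nonzero submodule of $R/\IVd_{n,k}$ — here the image of $\sqrt{\IVd_{n,k}}$ — must contain an associated prime of $R/\IVd_{n,k}$ in its support, hence would have dimension $k$, contradicting the previous paragraph. Therefore $\sqrt{\IVd_{n,k}}/\IVd_{n,k} = 0$, i.e. $\IVd_{n,k} = \sqrt{\IVd_{n,k}}$, which is precisely the assertion that $R/\IVd_{n,k}$ is reduced.

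The only mildly delicate point is the degree-comparison inequality and its equality case; this is a routine consequence of additivity of degree over the filtration of the nilpotent module, but one should phrase it carefully using that $R/\IVd_{n,k}$ and $R/\sqrt{\IVd_{n,k}}$ share all their $k$-dimensional minimal primes with the same multiplicities (which in turn follows from the primary decomposition described in the proof of the Proposition, where every $P_\Pi$ has $\deg R/P_\Pi = 1$). Alternatively, one can bypass the inequality entirely: unmixedness of $R/\IVd_{n,k}$ already gives $\IVd_{n,k} = \bigcap P_\Pi^{(e_\Pi)}$ for symbolic powers $e_\Pi \ge 1$, and then comparing $\deg R/\IVd_{n,k} = \sum_\Pi e_\Pi \cdot \deg R/P_\Pi = \sum_\Pi e_\Pi$ against $\deg R/\sqrt{\IVd_{n,k}} = \sum_\Pi 1 = S(n,k)$ forces every $e_\Pi = 1$, hence $\IVd_{n,k}$ is radical. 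I expect this second route to be the cleanest to write, so that is the one I would adopt.
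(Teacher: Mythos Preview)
Your first route is exactly the paper's proof: Cohen--Macaulayness forces any nonzero ideal of $R/\IVd_{n,k}$ to have full dimension $k$, so a nonzero nilradical would make $\deg R/\IVd_{n,k} > \deg R/\sqrt{\IVd_{n,k}}$, contradicting the equality $S(n,k)=S(n,k)$. One small caution about the alternative you say you would adopt: a $P_\Pi$-primary component need not be a symbolic power $P_\Pi^{(e_\Pi)}$, and even when it is, the local multiplicity is $\ell\bigl((R/\IVd_{n,k})_{P_\Pi}\bigr)$ rather than $e_\Pi$; the argument still goes through once you replace $e_\Pi$ by this length, but as written the first route is cleaner and is what the paper does.
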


\begin{proof}
Since $A:=R/\IVd_{n,k}$ is Cohen--Macaulay, any non-zero ideal $I \subset A$ 
satisfies $\dim I =\dim A$ as an $A$-module. Hence if $A$ is not reduced, then $\deg A > \deg A/\sqrt{(0)}$. However, it contradicts the fact that 
$$\deg \left(R/\IVd_{n,k} \right)=S(n,k)=\deg \left(R/\sqrt{\IVd_{n,k}} \right).$$
\end{proof}

\section*{ Acknowledgements}
We are grateful to the anonymous referee for telling us \cite[Lemma~2.3]{M06}, which drastically simplified the proof of Theorem~\ref{degree}.

\end{document}